\documentclass[12pt]{article}
\usepackage[centertags]{amsmath}
\usepackage{amsfonts}
\usepackage{amssymb}
\usepackage{amsthm}
\usepackage{amsmath,mathrsfs}
\usepackage{amsmath,amscd}
\title{\textbf{Relaxed $\check{C}$ech Cohomology, Emeralds Over Topological Spaces and The Kontsevich Integral}}
\author{Renaud Gauthier \footnote{rg.mathematics@gmail.com} \\ \\Lycee Albert Camus}
\theoremstyle{definition}
\newtheorem{deco}{Definition}[section]
\newtheorem{D}[deco]{Definition}
\newtheorem{Dwithsheaf}[deco]{Definition}
\newtheorem{emerald}[deco]{Definition}
\newtheorem{logdiffsheaf}[deco]{Example}
\newtheorem{Omsheaf}[deco]{Proposition}
\newtheorem{relCech}{Definition}[section]
\newtheorem{PartLXN}{Definition}[section]
\newcommand{\beq}{\begin{equation}}
\newcommand{\eeq}{\end{equation}}
\newcommand{\dlog}{\text{dlog}}
\begin{document}
\maketitle
\begin{abstract}
We introduce families of decorations of a same topological space, as well as a family of sheaves over such decorated spaces. Making those families a directed system leads to the concept of emerald over a space. For the configuration space $X_N$ of $N$ points in the plane, connecting points of the plane with chords is a decoration and the sheaf of log differentials over such spaces forms an emerald. We introduce a relaxed form of $\check{C}$ech cohomology whereby intersections are defined up to equivalence. Two disjoint open sets of $X_N$ whose respective points are connected by a chord is one instance of intersection up to equivalence. One paradigm example of such a formalism is provided by the Kontsevich integral.
\end{abstract}
\newpage

\section{Introduction}
The aim of this short paper is to introduce a few concepts that the author deems to be of relevance in the study of singular knots. Vassiliev was among the first to grasp the importance of seeing knot invariants as arising from the desingularization of knots with transversal intersections \cite{V}. One important case of such a knot invariant is the Kontsevich integral \cite{K} which is valued in the completion of a graded algebra $\mathcal{A}$ of geometric objects such as links with horizontal chords ending on their components. The configuration space $X_N$ of $N$ points in the complex plane comes up naturally in the study of such graphs with chords. We are therefore led to consider spaces with decorations on their points such as chords stretching from one point to another for instance. If we have a family $\mathcal{D}$ of such decorations of a same space $X$, the next thing to consider is the introduction of sheaves $\mathcal{F}_d$ over each decorated space $d(X)$ of $X$, $d \in \mathcal{D}$. We refer to such a construction as a decorated fan over $X$. If the collection of decorations forms a directed system and the corresponding sheaves $\mathcal{F}_d$ are correspondingly organized as a directed system, we regard such a system of sheaves as originating from a same sheaf $\mathcal{F}$, and the triple $(X, \mathcal{D}, \mathcal{F})$ we call an emerald over $X$. A paradigm example is provided by the sheaf of log differentials $\dlog(z_i-z_j)$ over $X_N$ on which we choose a directed system of decorations to be given by chords between points in the plane. Chords between components of a given link arise in the study of singular links and correspond to points which are identified on the singular link. Thus we are led to define a relaxed form of $\check{C}$ech cohomology whereby intersections are defined up to some equivalence. In the case of chords between points, distinct open sets whose points are connected by a chord are considered to be intersecting up to chords, and we are thus regarding the collection of log differentials as one forms in relaxed $\check{C}$ech cohomology. This formalism can easily be applied to the Kontsevich integral.\\

Rather than repeat the formalism of \cite{RG} on which this work is based, the reader is referred to that paper for definitions regarding the configuration space of $N$ points in the plane, chord diagrams, the Kontsevich integral and chord diagram valued log differentials.\\

In section 2 we define decorations $d$, decorated fans $(X, \mathcal{D}, \{\mathcal{F}_d\})$ as well as emeralds $(X, \mathcal{D}, \mathcal{F})$. In section 3 we define relaxed $\check{C}$ech cohomology, which will enable us to view log differentials as one forms over decorated spaces. In section 4 we apply this formalism to the Kontsevich integral.

\section{Emeralds over Topological Spaces}
\begin{deco}
For $X$ a topological space, $d$ a decoration of the space $X$, we let $d(X)=\{dx \: | \: x \in X \}$. There is a natural projection $d(X) \xrightarrow{\pi_d} X$ which strips every point $d(x)$ of $d(X)$ of its decoration to yield back the point $x \in X$. We topologize $d(X)$ using the topology of $X$.
\end{deco}
\begin{D}
Let $\mathcal{D}$ be a collection of decorations $d$ of a same topological space $X$. Then the pair $(X, \mathcal{D})$ is called a decorated fan over $X$.
\end{D}
\beq
\setlength{\unitlength}{0.5cm}
\begin{picture}(8,7)
\put(0,6){$d(X)$}
\put(5,6){$d'(X)$}
\multiput(8,6.3)(0.5,0){3}{\circle*{0.15}}
\linethickness{0.3mm}
\put(2,5){\vector(4,-3){4}}
\put(6,5){\vector(1,-3){0.9}}
\put(2,3){$\pi_d$}
\put(7,4){$\pi_{d'}$}
\put(6.8,1){$X$}
\end{picture}
\eeq
\begin{Dwithsheaf}
Let $X$ be a topological space, $\mathcal{D}$ a collection of decorations $d$ of the points of $X$, $\mathbf{F}=\{\mathcal{F}_d \: | \: d \in \mathcal{D} \}$ a collection of sheaves $\mathcal{F}_d$ over each decorated space $d(X)$. We call the triple $(X, \mathcal{D}, \mathbf{F})$ a decorated fan of $X$ with sheaves.
\end{Dwithsheaf}
\beq
\setlength{\unitlength}{0.5cm}
\begin{picture}(8,10)
\put(0,6){$d(X)$}
\put(5,6){$d'(X)$}
\put(1,9){\vector(0,-1){2}}
\put(1,10){$\mathcal{F}_d$}
\put(5.5,9){\vector(0,-1){2}}
\put(5,10){$\mathcal{F}_{d'}$}
\multiput(8,10.3)(0.5,0){3}{\circle*{0.15}}
\multiput(8,6.3)(0.5,0){3}{\circle*{0.15}}
\linethickness{0.3mm}
\put(2,5){\vector(4,-3){4}}
\put(6,5){\vector(1,-3){0.9}}
\put(2,3){$\pi_d$}
\put(7,4){$\pi_{d'}$}
\put(6.8,1){$X$}
\end{picture}
\eeq
\begin{emerald}
Let $X$ be a topological space, $\mathcal{D}$ a directed set of decorations of $X$, $(X, \mathcal{D}, \mathbf{F})$ a decorated fan with sheaves over $X$. Suppose the map on sheaves $\mathcal{F}_d$ over each decorated space $d(X)$ induced by the partial order on such decorated spaces is a morphism of sheaves that amounts to building a sheaf $\mathcal{F}_{d'}$ from its predecessor $\mathcal{F}_d$, $d \leq d'$, and if in addition $d_0$ is a smallest decoration for the partial order on $\mathcal{D}$, then writing $\mathbf{F}=\mathcal{F}=\mathcal{F}_{d_0}$, we call $(X, \mathcal{D}, \mathcal{F})$ an emerald over $X$. We regard $\mathcal{F}$ as being $\mathcal{F}_d$ over each $d(X)$, $d \in \mathcal{D}$.
\end{emerald}
\beq
\setlength{\unitlength}{0.5cm}
\begin{picture}(14,10)
\put(0,6){$d(X)$}
\put(5,6){$d'(X)$}
\put(12,6){$d^{(p)}(X)$}
\put(6,9.3){\vector(-2,-1){4}}
\put(7,9){\vector(-1,-2){0.8}}
\put(8,9.3){\vector(2,-1){4}}
\put(7,10){$\mathcal{F}$}
\multiput(8,6.3)(0.8,0){3}{\circle*{0.15}}
\linethickness{0.3mm}
\put(2,5){\vector(4,-3){4}}
\put(6,5){\vector(1,-3){0.9}}
\put(12,5){\vector(-4,-3){4}}
\put(6.8,1){$X$}
\end{picture}
\eeq
where $p$ is finite in case we have a finite collection of decorations, infinity otherwise.

\begin{logdiffsheaf}
We consider the decoration of points $Z=\{z_1, \cdots, z_N \}$ of $X_N$ by means of chords between their defining points. Let $d_{i-2}$ be the decoration that connects two by two by a chord $i$ defining points of each point of $X_N$. $d_0$ connects two defining points of each point of $X_N$ as seen in \cite{RG}. Over $d_0(X_N)$ we consider log differentials $\dlog \! \vartriangle \!\!z$ whose complex linear span we denote by $\Omega^1 (\log \! \vartriangle \!\! \mathbb{C})$. For a point $Z=\{z_1,\cdots, z_N \}$ for which two of its defining points $z_i$ and $z_j$, $1 \leq i \neq j \leq N$, are connected by a chord in $d_0(X_N)$, we associate the log differential $\dlog (z_i-z_j)$. For three of its defining points being singled out, we consider those decorations that mutually connect all three points together. Let $z_k$ being that third point, other than $z_i$ or $z_j$. We seek a generalization of log differentials corresponding to the triangle with vertices $z_i$, $z_j$ and $z_k$ and edges being chords. We regard such an object as resulting from the coalescing of six distinct points $z^{(a)}$, $1 \leq a \leq 6$, in such a manner that:
\newpage
\begin{itemize}
\item $z^{(1)}, \: z^{(6)} \rightarrow z_i$ \\
\item $z^{(2)}, \: z^{(3)} \rightarrow z_j$ \\
\item $z^{(4)}, \: z^{(5)} \rightarrow z_k$ \\
\end{itemize}
where $z^{(1)}$, $z^{(2)}$ are connected by a chord, as well as $z^{(3)}$, $z^{(4)}$ and $z^{(5)}$, $z^{(6)}$. To each chord corresponds a log differential, and the sum of all three chords corresponds to the sum:
\beq
\dlog(z^{(1)} - z^{(2)}) + \dlog(z^{(3)} - z^{(4)}) + \dlog(z^{(6)} - z^{(5)})
\eeq
which in the limit converges to:
\beq
\dlog(z_i-z_j) + \dlog(z_j-z_k) + \dlog(z_i-z_k)=\dlog(z_i-z_j)(z_j-z_k)(z_k-z_i)
\eeq
and the complex linear span of such elements defines the generalization of $\Omega^1(\log \!\vartriangle \!\! \mathbb{C})$ over $d_1(X_N)$. Continuing in this fashion, if $p \leq N$ is the number of defining points of each point of $X_N$ that are mutually connected by chords, then the complex linear span of elements of the form:
\beq
\dlog \prod_{1 \leq k<l \leq p}(z_{i_k}-z_{i_l})
\eeq
constitutes a generalization of $\Omega^1 (\log \!\vartriangle \!\! \mathbb{C})$ over $d_{p-2}(X_N)$. Thus the collection of generalizations of $\Omega^1(\log \! \vartriangle \!\!\mathbb{C})$ over each $d_{i-2}(X_N)$ constitutes a directed system of sheaves $\Omega^1(\log \! \vartriangle \!\!\mathbb{C})_{d_{i-2}}$, $2 \leq i \leq N$, that we commonly refer to as $\Omega^1(\log \!\vartriangle \!\! \mathbb{C})$. If we denote by $\mathcal{C}$ such a decoration of points by chords, then $(X_N, \mathcal{C}, \Omega^1(\log \!\vartriangle \!\! \mathbb{C}))$ is an emerald over $X_N$ provided we can prove that the generalization of $\Omega^1(\log \!\vartriangle \!\!\mathbb{C})$ over each $d_{i-2}(X_N)$, $2 \leq i \leq N$, is a sheaf. This we do below in Proposition \ref{Omsheaf}.\\

For completeness' sake, $X_N$ is the configuration space of $N$ points in the complex plane, so we can write $X_N(\mathbb{C})$ to emphasize that when we use the word plane we really mean the complex plane. We can generalize this concept of configuration space to any plane. Now if $X_N^{chd}$ denotes the same space for which each of its points has all its defining points connected by chords two by two and $\pi_{d_N}$ is the projection from that space to $X_N$, then in defining $d_{p-2}(X_N)$ we want points of $X_N$ for which only $p$ of their defining points are connected by chords. This amounts to considering $X_{N-p}(\mathbb{C}-\pi_{d_p}X_p^{chd}) \times X_p ^{chd}$ which is the set of pairs $(x,y)$ where $x \in X_{N-p}(\mathbb{C}-\pi_{d_p}(y))$ for $y \in X_p^{chd}$. Then:
\beq
d_{p-2}(X_N)\simeq X_{N-p}(\mathbb{C}-\pi_{d_p}X_p^{chd}) \times X_p ^{chd}
\eeq

As far as the generalization of $\Omega^1(\log \!\vartriangle \!\!\mathbb{C})$ over each $d_{r-2}(X_N)$ is concerned, $2 \leq r \leq N$, for $r$ points connected together by chords, we are interested in elements of the form:
\begin{align}
\dlog \prod_{1 \leq k < l \leq r}(z_{i_k}-z_{i_l})&=\sum_{1 \leq k<l \leq r}\dlog (z_{i_k}-z_{i_l}) \nonumber \\
&=\sum_{1 \leq k<l \leq r}1 \cdot\dlog (z_{i_k}-z_{i_l}) \in \mathbb{P}^{(r+1)(r-2)/2}_{\mathbb{C}} \Omega^1(\log \! \vartriangle \!\! \mathbb{C})
\end{align}
and $\langle \mathbb{P}^{(r+1)(r-2)/2}_{\mathbb{C}} \Omega^1(\log \! \vartriangle \!\! \mathbb{C})\rangle_{\mathbb{C}}$ constitutes the generalization of $\Omega^1 (\log \!\vartriangle \!\! \mathbb{C})$ over $d_{r-2}(X_N)$.\\

\begin{Omsheaf} \label{Omsheaf}
$\Omega^1 (\log \!\vartriangle \!\! \mathbb{C})$ is a sheaf over $d_{p-2}(X_N)$, $2 \leq p \leq N$.
\end{Omsheaf}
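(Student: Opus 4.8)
The plan is to verify the two sheaf axioms, separation and gluing, for the presheaf at hand. Write $\mathcal{F}=\Omega^1(\log \!\vartriangle \!\! \mathbb{C})$ and $\omega_{kl}=\dlog(z_{i_k}-z_{i_l})$ for the $\binom{p}{2}$ chord generators among the $p$ mutually connected defining points, so that $\mathcal{F}(U)=\langle\,\omega_{kl}|_U\,\rangle_{\mathbb{C}}$ with restriction maps given by restriction of differential forms. The guiding observation is that each generator $\omega_{kl}$ is a single globally defined holomorphic $1$-form on $d_{p-2}(X_N)$, the diagonals $z_{i_k}=z_{i_l}$ being excised from the configuration space, and that the coefficients appearing in a section are genuine constants in $\mathbb{C}$. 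Realising $\mathcal{F}$ in this way exhibits it as a sub-presheaf of the honest sheaf $\Omega^1$ of holomorphic $1$-forms, which makes the separation axiom automatic: if $s\in\mathcal{F}(U)$ restricts to $0$ on every member of an open cover of $U$, then $s=0$ already as a section of $\Omega^1$, since $\Omega^1$ is a sheaf.

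The substance is therefore the gluing axiom. Given an open cover $\{U_\alpha\}$ of $U$ and sections $s_\alpha=\sum_{k<l}c^{\alpha}_{kl}\,\omega_{kl}$ on $U_\alpha$ that agree on overlaps, I would first glue them inside the ambient sheaf $\Omega^1$ to a unique holomorphic form $s$ on $U$, and then show that $s$ again lies in the span of the $\omega_{kl}$. On each nonempty overlap $U_\alpha\cap U_\beta$ agreement reads $\sum_{k<l}(c^{\alpha}_{kl}-c^{\beta}_{kl})\,\omega_{kl}=0$. The crux is then the claim that the $\omega_{kl}$ are $\mathbb{C}$-linearly independent on every nonempty open subset; granting this, the matching constants force $c^{\alpha}_{kl}=c^{\beta}_{kl}$ wherever the charts meet, so the tuples $(c^{\alpha}_{kl})_\alpha$ are locally constant and descend, on each connected component of $U$, to a single tuple $(c_{kl})$. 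The glued form is then $s=\sum_{k<l}c_{kl}\,\omega_{kl}\in\mathcal{F}(U)$, as required.

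The main obstacle is precisely this global $\mathbb{C}$-linear independence of the generators $\dlog(z_{i_k}-z_{i_l})$. I would reduce the local statement to the global one by analytic continuation: a relation $\sum c_{kl}\,\omega_{kl}=0$ holding on a nonempty open subset of the connected complex manifold $d_{p-2}(X_N)$ holds everywhere by the identity theorem, so it suffices to prove independence of the $\omega_{kl}$ as global forms. For this I would use the residue calculus along the diagonal divisors $z_{i_k}=z_{i_l}$ in a partial compactification: $\omega_{kl}$ has residue $1$ along its own diagonal and $0$ along the others, so applying $\mathrm{Res}_{z_{i_{k'}}=z_{i_{l'}}}$ to a vanishing combination extracts $c_{k'l'}=0$; equivalently, this is Arnold's computation exhibiting the classes $[\dlog(z_i-z_j)]$ as a basis of the first cohomology of the configuration space. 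Finally, the decomposition $d_{p-2}(X_N)\simeq X_{N-p}(\mathbb{C}-\pi_{d_p}X_p^{chd})\times X_p^{chd}$ lets me regard every $\omega_{kl}$ as pulled back from the $X_p^{chd}$ factor, so that independence over the $p$ chorded points suffices and the remaining free points play no role.
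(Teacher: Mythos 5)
Your proof is correct in substance but takes a genuinely different route from the paper's. The paper argues directly and combinatorially: it decomposes every open set of $X_N$ into ``small enough'' basic opens $V_i=\{V_{i;1},\dots,V_{i;N}\}$ on which the $N$ defining points can be consistently labeled, checks the presheaf axioms on these, and then spends most of its effort showing that on an overlap $V_{i;k}\cap V_{j;l}$ the two local labelings must agree (``label independence''), which is what allows local sections to be patched. You instead embed $\Omega^1(\log\!\vartriangle\!\!\mathbb{C})$ into the honest sheaf of holomorphic $1$-forms, obtain separation and the existence of a glued form for free, and reduce gluing to the $\mathbb{C}$-linear independence of the generators $\dlog(z_{i_k}-z_{i_l})$ on every nonempty open set, which you get from the identity theorem plus the residue/Arnold computation. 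Your version isolates the real content --- local constancy of the coefficients, handled correctly component by component, where the paper only says ``since the $V_i$'s are disjoint, the sections are consistent.'' One caveat deserves attention: your premise that each $\dlog(z_{i_k}-z_{i_l})$ is a \emph{single globally defined} form on $d_{p-2}(X_N)$ tacitly assumes a global ordering of the defining points. The paper's configurations are unordered sets $Z=\{z_1,\dots,z_N\}$ --- that is precisely why it fusses over local labelings --- so the individual generators are only locally defined and are permuted by the monodromy of the labeling. You should therefore either run the gluing step on the ordered configuration space (a labeled cover) and descend, or track the tuple of coefficients as a section of the associated local system rather than as a locally constant function. With that amendment your residue argument goes through, and the resulting proof is, if anything, tighter than the one in the paper.
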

\begin{proof}
Fix $p$, $2 \leq p \leq N$. For $U$ open in $X_N$, we first define $\Omega^1(\log \!\vartriangle \!\! \mathbb{C})(U)=\{\sum_{1 \leq i_1, \cdots, i_p \leq N} \lambda_{i_1 \cdots i_p}\dlog \amalg_{1 \leq k<l \leq p}(z_{i_k}-z_{i_l})\,|\,Z=\{z_1,\cdots ,z_N\} \in U ,\;\; \lambda_{i_1 \cdots i_p} \in \mathbb{C}\}$. Let $U=\coprod_{i \in I}V_i$, $V_i$ small enough basic opens in $X_N$, $i \in I$. On each $V_i$ we may have a different local labeling of points from other $V_j$'s, $j \in I,\, j \neq i$. Since the $V_i$'s are disjoint, $\Omega^1(\log \!\vartriangle \!\! \mathbb{C})(U)$ has a consistent set of sections. For $V \subset U$, $V=\coprod_{j \in J} W_j$, where the $W_j$'s are small enough basic opens, for all $j \in J\, \;\exists i \in I$, $W_j \subset V_i$, and we have an obvious restriction map $\rho_{V_i W_j}:\Omega^1(\log \!\vartriangle \!\! \mathbb{C})(V_i) \rightarrow \Omega^1(\log \!\vartriangle \!\! \mathbb{C})(W_j)$. We have $\Omega^1(\log \!\vartriangle \!\! \mathbb{C})(\emptyset)=\emptyset$, and for $U$ any open in $X_N$, $\rho_{UU}=id$. If $W \subseteq V \subseteq U$ are basic open sets with the small enough property, with corresponding restriction maps $\rho_{UV}$, $\rho_{VW}$ and $\rho_{UW}$, then we have $\rho_{VW}\circ \rho_{UV}=\rho_{UW}$. The same results holds for open sets that are not necessarily small enough by the above reasoning. Thus $\Omega^1(\log \!\vartriangle \!\! \mathbb{C})$ is a presheaf. Now let $U$ be open in $X_N$. Let $(U_i)_{i \in I}$ be a covering of $U$ by open sets. We need to show that if $\sigma$ is a section of $\Omega^1(\log \!\vartriangle \!\! \mathbb{C})$ such that $\sigma|_{U_i}=0$ for all i in I, then $\sigma \equiv 0$. This follows from the definition of the sections. Indeed, fix $i \in I$. Let $U_i=\amalg_{j \in J_i}V_{i;j}$ be a covering of $U_i$ by small enough basic opens. Each basic open $V_{i;j}$ is of the form $V_{i;j}=\{V_{i;j1},...,V_{i;jN}\}$ where the $V_{i;jk},\,1 \leq k \leq N$ are non-overlaping open sets in the complex plane. The statement $\sigma_{U_i}=0$ implies $\sigma_{V_{i;j}}=0$ for all $j \in J_i$, and this for all $i \in I$. Now it suffices to write:
\beq
U=\cup_{i \in I}U_i=\cup_{\substack{i \in I \\ j \in J_i}}V_{i;j}
\eeq
and on each $V_{i;j}$ we have $\sigma|_{V_{i;j}}=0$, so $\sigma \equiv 0$. A slight subtlety arises in the argument as different open sets $U_i$, $U_j$ may be unions of small enough opens that intersect. In that case if say $V_{i;k} \cap V_{j;l} \neq \emptyset$, then $\sigma|_{V_{i;k}}=0$ and $\sigma|_{V_{j;j}}=0$ implies $\sigma|_{V_{i;k} \cup V_{j;l}}=0$ where $V_{i;k} \cup V_{j;l}$ is made small enough by using one labeling throughout. Finally if $U$ is open in $X_N$, $(U_i)_{i \in I}$ is an open covering of $U$ and we adopt the same notations as above for the covering of each open $U_i$ by small enough basic opens, then suppose $s_i$ is a section of $\Omega^1(\log \! \vartriangle \!\! \mathbb{C})(U_i)$, $s_j$ is a section of $\Omega^1(\log \!\vartriangle \!\! \mathbb{C})(U_j)$, such that $s_i|_{U_i \cap U_j}=s_j|_{U_i \cap U_j}$, and this for all $i,j$ in $I$, then we want to show there is a global section on $U$. Suppose $U_i \cap U_j \neq \emptyset$, $U_i=\amalg_{k \in J_i}V_{i;k}$, $U_j=\amalg_{l \in J_j}V_{j;l}$, then there is at least some $k \in J_i$ and some $l \in J_j$ such that $V_{i;k} \cap V_{j;l} \neq \emptyset$. $V_{i;k} \cap V_{j;l}$ however is not small enough as we may have two possibly different labelings on the intersection. Thus on $V_{i;k} \cap V_{j;l}$, $s_i$ and $s_j$ must be label independent, and both $V_{i;k}$ and $V_{j;l}$ being small enough, this label independence must extend to the union $V_{i;k} \cup V_{j;l}$. Thus on the union we can write:
\beq
s_i=s_j=\sum \lambda_{i_1 \cdots i_p} \dlog \prod_{1 \leq r < s \leq p}(z_{i_r}-z_{i_s})
\eeq
and this for all non empty intersections $V_{i;k} \cap V_{j;l}$, and this for all intersections $U_i \cap U_j$, which leads to having a globally defined section of $\Omega^1(\log \!\vartriangle \!\! \mathbb{C})$ over $U$. Thus $\Omega^1(\log \!\vartriangle \!\! \mathbb{C})$ is a sheaf over $d_{p-2}(X_N)$, $2 \leq p \leq N$.
\end{proof}
\end{logdiffsheaf}

\section{Relaxed $\check{C}$ech cohomology}
Without pretense to originality, we briefly review elementary $\check{C}$ech cohomology as it is covered in ~\cite{H}. If X is a topological space, $\mathcal{U}=(U_i)_{i \in I}$ an open covering of X, with a fixed well-ordering of the indexing set I, $\mathcal{F}$ a sheaf of abelian groups on X, we define a complex $C^{\cdot}(\mathcal{U};\mathcal{F})$ as follows: for all $p \geq 0$, we define:
\beq
C^p(\mathcal{U};\mathcal{F}):=\prod_{i_0 < \cdots < i_p} \mathcal{F}(U_{i_0} \cap \cdots \cap U_{i_p})
\eeq
To give an element $\psi$ of $C^p(\mathcal{U};\mathcal{F})$ is equivalent to giving a collection $\{\psi_{i_0 \cdots i_p}\;|\; i_0 < \cdots < i_p\}$ where $\psi_{i_0 \cdots i_p} \in \mathcal{F}(U_{i_0} \cap \cdots \cap U_{i_p})$. The coboundary operator $\delta:= \delta_p : C^p \rightarrow C^{p+1}$ is defined as follows on each component:
\beq
(\delta_p \psi)_{i_0 \cdots i_{p+1}}:=\sum_{0 \leq k \leq p+1}(-1)^k \psi_{i_0 \cdots \hat{i}_k \cdots i_{p+1}}|_{U_{i_0} \cap \cdots \cap U_{i_{p+1}}}
\eeq
where the hat on an index indicates that it is omitted. We define the $p$-th $\check{C}$ech cohomology group of $\mathcal{F}$ with respect to the covering $\mathcal{U}$ to be:
\beq
\check{H}^p(\mathcal{U};\mathcal{F}):=h^p(C^{\cdot}(\mathcal{U};\mathcal{F}))
\eeq
We now define a generalized version of this theory that we will call relaxed $\check{C}$ech cohomology, whereby intersections are defined up to equivalence.
\begin{relCech}
Let $d$ be a decoration of $X_N$ defined by an equivalence between the defining points of each point of $X_N$. Let $p \leq N$ be the number of such defining points that are considered to be equivalent under $d$. Let $U=\{U_1, \cdots, U_N\}$ be an open set of $Z=\{z_1, \cdots, z_N\} \in X_N$, $1 \leq i_0 < \cdots < i_{p-1} \leq N$ be $p$ indices for $p$ defining points related to each other by the above equivalence. Then we define:
\beq
U_{i_0} \cap_d \cdots \cap_d U_{i_{p-1}}=\{\text{defining pts in }U_{i_0}, \cdots, U_{i_{p-1}} \text{ resp. that are equivalent}\}
\eeq
Let $\mathcal{F}_d$ be a sheaf over $d(X_N)$ whose definition takes account only of the decoration of defining points. Then we write $\mathcal{F}_d(U_{i_0} \cap_d \cdots \cap_d U_{i_p})$ for $\mathcal{F}_d(U)$ and we define the $p$-th relaxed $\check{C}$ech cohomology groups as:
\beq
\check{C}^p_{rlx}(\mathcal{U};\mathcal{F}_d)=\prod_{1 \leq i_0 < \cdots < i_p \leq N} \mathcal{F}_d(U_{i_0} \cap_d \cdots \cap_d U_{i_p})
\eeq
For $(X_N, \mathcal{D}, \mathcal{F})$ an emerald over $X_N$, $\mathcal{D}=\{d_{p-2} \: |\: 2 \leq p \leq N\}$, $d_{p-2}$ decorations between $p$ defining points, $\mathcal{F}=\{\mathcal{F}_d\}_{d \in \mathcal{D}}$, we write:
\beq
U_{i_0} \cap_{\mathcal{D}} \cdots \cap_{\mathcal{D}} U_{i_{p-1}}=U_{i_0} \cap_{d_{p-2}} \cdots \cap_{d_{p-2}} U_{i_{p-1}} \qquad ; \quad 2 \leq p \leq N
\eeq
and:
\beq
\check{C}^{p-1}_{rlx}(\mathcal{U};\mathcal{F})=\check{C}^{p-1}_{rlx}(\mathcal{U};\mathcal{F}_{d_{p-2}})
\eeq
\end{relCech}

An element $\psi$ of $\check{C}^p_{rlx}(\mathcal{U};\mathcal{F})$ is given by a collection of elements $\psi_{i_0  \cdots  i_p}$ for indices ${i_0 < \cdots < i_p}$. We have:
\beq
(\delta_p \psi)_{i_0, \cdots, i_{p+1}}=\sum_{0 \leq k \leq p+1}(-1)^k \psi_{i_0 \cdot \hat{i}_k \cdots i_{p+1}}\big|_{U_{i_0} \cap_{\mathcal{D}} \cdots \cap_{\mathcal{D}} U_{i_{p+1}}}
\eeq
We define the $p$-th relaxed $\check{C}$ech cohomology groups as:
\beq
\check{H}^p_{rlx}(\mathcal{U};\mathcal{F}):=h^p(\check{C}^{\cdot}_{rlx}(\mathcal{U};\mathcal{F}))
\eeq

\section{Application: the Kontsevich Integral}
For $P$ a pairing of order 1, $|P|=1$, we regard $|P\rangle (Z) \dlog \!\vartriangle \!\!z[P] (Z)$ over $Z \in X_N$ as being equivalent to having a pair $(|P\rangle (Z), \dlog \!\vartriangle \!\!z[P](Z))$ over the same point, or equivalently an element $\dlog \vartriangle \!\!z[P](Z)$ over $|P\rangle (Z) \in d_0(X_N)$ where $d_0 \in \mathcal{C}$. In other terms the element:
\beq
\Omega=\sum_{|P|=1}|P\rangle \dlog \vartriangle \!\!z[P]
\eeq
corresponds to considering all the generators of $\Omega^1(\log \!\vartriangle \!\! \mathbb{C})$ over $d_0(X_N)$. For a point $Z=\{z_1, \cdots, z_N\} \in X_N$, $1 \leq i<j \leq N$, $\dlog (z_i-z_j) \in \Omega^1 (\log \!\vartriangle \!\! \mathbb{C})_Z$ is actually an element of $\Omega^1 (\log \!\vartriangle \!\!\mathbb{C})(U_i \cap_{\mathcal{D}} U_j)$, $U=\{U_1, \cdots, U_N\}$ a neighborhood of $Z$ in $X_N$. Thus:
\beq
\prod_{i<j} \{\dlog(z_i-z_j)\} \in \prod_{i<j} \Omega^1 (\log \!\vartriangle \!\! \mathbb{C})(U_i \cap_{\mathcal{D}} U_j)=\check{C}^1_{rlx} (\mathcal{U};\Omega^1 (\log \!\vartriangle \!\! \mathbb{C}))
\eeq
and on $\check{C}^p_{rlx} (\mathcal{U};\Omega^1 (\log \!\vartriangle \!\! \mathbb{C}))$ we have:
\begin{align}
(\delta_p \psi)_{i_0, \cdots, i_{p+1}}&=\sum_{0 \leq k \leq p+1}(-1)^k \psi_{i_0 \cdot \hat{i}_k \cdots i_{p+1}}\big|_{U_{i_0} \cap_{\mathcal{D}} \cdots \cap_{\mathcal{D}} U_{i_{p+1}}}\\
&=\sum_{0 \leq k \leq p+1}(-1)^k \psi_{i_0 \cdot \hat{i}_k \cdots i_{p+1}}
\end{align}
We are therefore led to regarding $\Omega$ as a one form in relaxed $\check{C}$ech cohomology. In \cite{RG2} we saw that such a form comes up in the expression for some holonomy; if we put a given link $L$ in braid position and denote by $\tilde{\gamma} \in \mathbb{C} \times I$ the geometric braid we obtain, if this latter corresponds to the lift of some loop $\gamma$ in $X_N$, then the holonomy of the KZ connection on $\overline{\mathcal{A}}$ over $X_N$ along $\gamma$ can be used to generate the Kontsevich integral $Z(L)$. If $h$ denotes this holonomy, we write:
\beq
h=e^{\int_I \omega(\gamma)}=\sum_{m\geq 0}\frac{1}{m!}\int_{I^m} \omega^m
\eeq
where $\omega=\Omega/2 \pi i$. We can rewrite this as follows: $\log h$ defines a map:
\begin{align}
\langle\:,\:\rangle: \check{C}^1_{rlx} (\mathcal{U};\Omega^1 (\log \!\vartriangle \!\! \mathbb{C})) \times LX_N & \rightarrow \mathcal{A}^1(\tilde{\gamma})  \nonumber \\
(\omega, \gamma) & \mapsto \langle \omega, \gamma \rangle =\int_I \omega(\tilde{\gamma})
\end{align}
For $\gamma$ fixed:
\begin{align}
\langle \quad, \gamma \rangle: \check{C}^1_{rlx} (\mathcal{U};\Omega^1 (\log \!\vartriangle \!\! \mathbb{C})) & \rightarrow \mathcal{A}^1(\tilde{\gamma}) \nonumber \\
\omega \mapsto \int_I \omega(\tilde{\gamma})
\end{align}
is a homomorphism. This suggests that geometric braids are dual objects to one-forms in relaxed $\check{C}$ech cohomology. We can make this more precise as follows.\\

In this setting, it is not $X_N$ that is more natural to consider but as the above map suggests it is the loop space $LX_N$ of $X_N$. The concept of small enough basic open sets in $X_N$ carries over to $LX_N$. For $\gamma \in LX_N$, $\tilde{\gamma}$ its associated geometric braid in $\mathbb{C} \times I$, a basic open set about $\gamma$ is given by a collection $U=\{U_1, \cdots, U_N\}$ of $N$ disjoint open cylinders about each of the $N$ strands defining $\tilde{\gamma}$, homeomorphic to $\{z \;\; | \;\; |z|<1\} \times I$. Such open sets are said to be small enough if for all $t \in I$, $1 \leq \i \leq N$, $U_i(t)=U_i \cap \mathbb{C} \times \{t\}$ is small enough. Defining points are now replaced by strands, and decorations of points of $LX_N$ are now given by horizontal chords between strands of the geometric braid associated to a given element of $LX_N$. It is not difficult to see that the pull-back of log differentials to the unit interval forms a sheaf over $LX_N$, and as we did above we even have an emerald over $LX_N$, given by the decorations of strands as just discussed, and the pull-back of $\Omega^1 (\log \! \vartriangle \!\! \mathbb{C})$ to $I$ along with its generalizations $\langle \mathbb{P}^{(r+1)(r-2)/2} \Omega^1 (\log \! \vartriangle \!\! \mathbb{C}) \rangle _{\mathbb{C}}$ over each $d_{r-2}(LX_N)$. We denote by $\mathcal{U}$ an opening covering of $LX_N$ by small enough basic open sets. The intersection of open sets up to the decoration provided by homeomorphism types of horizontal chords between cylinders is defined path-wise; two open cylinders are defined to be intersecting up to chords if for all $t \in I$, their respective points in $\mathbb{C} \times \{t\}$ are connected by chords. Observe that since we consider homeomorphism classes of horizontal chords it suffices to have one such $t$ to define this intersection. The relaxed $\check{C}$ech cohomology we are now considering is a path space version of the one we had before and we denote it by $P\check{C}^\bullet _{rlx}(\mathcal{U};\Omega^1(\log \!\vartriangle \!\! \mathbb{C}))$.\\

This definition lends itself well to seeing such a group as the group characterizing deformations of elements of $LX_N$. Indeed deformations of geometric braids can be conveniently represented by pull-backs of forms along their strands to the unit interval $I$. Relative deformations such as those given by log differentials have additional features: not only do they give the relative deformations of strands with respect to one another, they also become singular when one or two strands come together to form a singular point. As a first observation, we have the following fact:
\beq
P\check{C}^1 _{rlx}(\mathcal{U};\Omega^1 (\log \! \vartriangle \!\! \mathbb{C})) \subset T^*LX_N
\eeq
Additionally, since log differentials become singular when strands are brought together, this means we can incorporate singular geometric braids into the picture. We proceed as follows:
\begin{PartLXN}
We denote by $LX_{N,p}$ the set of geometric braids with $p$ strands coming together at a single points of $\mathbb{C} \times I$, in such a manner that each pair of the $p$ strands involved in one given such singular point can be locally deformed to give a transversal intersection. We denote by $\mathbf{LX_N}$ the disjoint union of all such sets, with $LX_{N,0}=LX_N$ as previously considered. We can write:
\beq
\mathbf{LX_N}=LX_N \amalg LX_{N,1} \amalg \cdots LX_{N,N} \nonumber
\eeq
\end{PartLXN}
Note that this definition is somewhat artificial since a singular geometric braid can have singular points resulting from different numbers of strands coming together. It suffices to use concatenation to arrive at such a picture. With the above definition, $LX_{N,p}$ is fully determined by the sheaf $\langle \mathbb{P}^{(p+1)(p-2)/2}  \Omega^1 (\log \! \vartriangle \!\! \mathbb{C}) \rangle_{\mathbb{C}}$ over $d_{p-2}(LX_N)$, or equivalently by $P\check{C}^1 _{rlx}(\mathcal{U};\Omega^1 (\log \! \vartriangle \!\! \mathbb{C}))$. Indeed, a singular point resulting from the collapse of $p$ strands indexed by some indexing set $I_p$ is the singular locus of terms of the form $\sum_{i_k,i_j \in I_p} \lambda_{i_k,i_j} \dlog (z_{i_k}-z_{i_j})$, a simplest such expression being provided by $\prod_{i_k,i_j \in I_p} \dlog (z_{i_k}-z_{i_j})$, a generator of $\langle \mathbb{P}^{(p+1)(p-2)/2}  \Omega^1 (\log \! \vartriangle \!\! \mathbb{C}) \rangle_{\mathbb{C}}$, a sheaf over $d_{p-2}(LX_N)$. Away from those singular points the geometric braids are fully determined by the knowledge of log differentials restricted to the non-singular part of such braids, and this is given by the same sheaf. More generally, $\mathbf{LX_N}$ is fully determined by $P\check{C}^\bullet _{rlx}(\mathcal{U}; \Omega^1(\log \! \vartriangle \!\! \mathbb{C})$.\\

Finally we can give a pairing between loops in $X_N$ and relaxed $\check{C}$ech forms without having to resort to using an artificial integration over $I$. Consider the same pairing as introduced before:
\begin{align}
LX_N \times P \check{C}^1 _{rlx}(\mathcal{U}; \Omega^1 (\log \! \vartriangle \!\! \mathbb{C})) &\rightarrow \mathcal{A}^1 \nonumber \\
(\gamma, \omega) &\mapsto \int_I \omega (\tilde{\gamma})=\int_I \gamma^* \omega = \int_{\tilde{\gamma}}\omega
\end{align}
In this manner we have a map reminiscent of a cup product, thereby presenting braids as relaxed $\check{C}$ech 1-chains.

\end{document}